\documentclass[11pt]{amsart}
\usepackage{geometry} % see geometry.pdf on how to lay out the page. There's lots.
\usepackage{amssymb}

\usepackage{pgfplots}
\usepackage{psfrag}
\usepackage{amsmath}
\usepackage{calc}
\usepackage{systeme}
\usepackage{amsfonts}
\usepackage{amsthm}
\usepackage{algorithm2e}
\usepackage[applemac]{inputenc}
\usepackage{bbold}
\usepackage[numbers]{natbib}

\title{Infinite Horizon Stochastic Optimal Control with sign-changing discount factor}
\author{Charles Bertucci \textsuperscript{a}, Jean-Michel Lasry\textsuperscript{a}, Pierre-Louis Lions\textsuperscript{a,b}
}
\thanks{\textsuperscript{A} Universit\'e Paris-Dauphine, PSL Research University,UMR 7534, CEREMADE, 75016 Paris, France, \\ \textsuperscript{B}Coll\`ege de France, 3 rue d'Ulm, 75005, Paris, France. \\
E-mail addresses: bertucci@ceremade.dauphine.fr , 2007lasry@gmail.com , pierre-louis.lions@college-de-france.fr}

\newtheorem{theorem}{Theorem}
\newtheorem{lemma}{Lemma}

\begin{document}

\maketitle
\begin{abstract}
We study an infinite horizon stochastic optimal control problem by means of the associated Hamilton-Jacobi-Bellman equation. The problem we are studying has the particularity of having a discount factor which can take both signs, depending on the value of the state.
\end{abstract}

\section{Introduction}
The present paper is concerned with the study of infinite horizon optimal control problems with a discount factor which is not of constant sign, as well as with the study of the associated Hamilton-Jacobi-Bellman (HJB) equation. In \cite{bll}, the authors studied a similar HJB equation arising from a model to characterize the equilibrium value of certain goods. A partial link with optimal control was made, and here, it is extended and be made more precise.

We consider the problem of controlling a process \((X_{t})_{t\geq 0}\), valued in a smooth connected domain \(\mathcal O\subset \mathbb{R}^{d}\), given as a solution of

\begin{equation}
\label{eq:process}
dX_t = \alpha_t dt + \sqrt{2} dB_t \quad t > 0,
\end{equation}
with reflection on $\partial \mathcal O$ and where \((\alpha_{t})_{t\geq 0}\) is the control and \((B_{t})_{t\geq 0}\) a standard d-dimensional Brownian motion on a probabilistic space \((\Omega, \mathcal{A}, \mathbb{P})\). The controller aims at minimizing the expectation of a discounted integral of the running cost \(\frac{1}{2} |\alpha_t|^2 + g(X_t)\), where \(g \in \mathcal{C}(\bar {\mathcal O})\), is a continuous function. The discount is made through a function \(r: \mathcal O\to \mathbb{R}\), which is not necessarily non-negative. Hence, the cost is given by

\begin{equation}
\label{eq:cost}
J(\alpha) := \mathbb{E} \left[ \int_0^{\infty} e^{-\int_0^t r(X_s) ds} \left[ \frac{1}{2} |\alpha_t|^2 + g(X_t) \right] dt \right].
\end{equation}

Clearly, without assumptions on \(r\), the previous expectation is not well defined for any control. We shall come back on this difficulty later on and, for the moment, we observe that, at least formally, the HJB equation associated to (\ref{eq:process})-(\ref{eq:cost}) is

\begin{equation}\label{eq:HJB}
\begin{cases}
r u - \Delta u + \frac{1}{2} |\nabla_x u|^2 = g & \text{in } \mathcal{O}, \\
\partial_\eta u = 0 & \text{on } \partial \mathcal{O},
\end{cases}
\end{equation}
where $\partial_\eta$ denotes derivative in the direction orthogonal to the boundary.

In \cite{bll}, the authors established that, when \(g \geq 0\), equation (\ref{eq:HJB}) admits a unique solution \(u\) which satisfies \(u(x) > 0\) for all \(x \in \mathcal{O}\) (Theorem 2), as well as natural stability properties, which are consequences of the comparison principle stated as follows. They also made a partial link with an optimal control problem.

\begin{lemma}
\label{lemma:comparison_positive}
Let \(u\) and \(v\) be two smooth functions, bounded from below by a positive constant such that

\[
\begin{cases}
r u - \Delta u + \frac{1}{2} |\nabla_x u|^2 \geq g & \text{in } \mathcal{O}, \\
r v - \Delta v + \frac{1}{2} |\nabla_x v|^2 \leq g & \text{in } \mathcal{O}, \\
\partial_\eta u = \partial_\eta v = 0 & \text{on } \partial \mathcal{O}.
\end{cases}
\]

If \(g \geq 0\) and \(\lambda_1(-\Delta + r Id) < 0\), then \(u \leq v\).
\end{lemma}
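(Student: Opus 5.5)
The plan is to exploit the convexity of the operator
\[
F(w) := r w - \Delta w + \tfrac12 |\nabla_x w|^2
\]
rather than a pointwise maximum principle. A pointwise argument at a maximum point of $u - \theta v$, with $\theta = \max u/v$, only yields the inequality
\[
-\Delta u + \theta \Delta v + \tfrac12(\theta^2-\theta)|\nabla v|^2 \ge g(1-\theta).
\]
This gives no contradiction, and it uses neither the sign of $\lambda_1$ nor the global structure. The reversed inequality $u\le v$, which is the opposite of the usual comparison, means $\lambda_1(-\Delta + rId)<0$ must enter through a global argument. For this I use the principal Neumann eigenfunction $\varphi>0$ of $-\Delta + rId$ on $\mathcal O$, with $-\Delta\varphi + r\varphi = \lambda_1\varphi$ and $\partial_\eta\varphi = 0$.

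\textbf{Step 1 (convexity produces a family of supersolutions).} Argue by contradiction and assume $\max(u-v)>0$. For $t\in(0,1)$ set $w_t := (u-tv)/(1-t)$, so that $u = t v + (1-t) w_t$. Since $F$ is convex in $(w,\nabla w,\Delta w)$ (it is affine plus $\frac12|\nabla w|^2$), we get $F(u)\le tF(v)+(1-t)F(w_t)$. Hence
\[
F(w_t)\ \ge\ \frac{F(u)-tF(v)}{1-t}\ \ge\ \frac{g - t g}{1-t} = g \ge 0 ,
\]
with $\partial_\eta w_t=0$. Moreover, $w_t\to+\infty$ uniformly on a neighbourhood of $\{u>v\}$ as $t\to1$, while $w_t \ge -t\|v\|_\infty/(1-t)$ elsewhere. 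In the same spirit, scaling gives that $\theta u$ is still a supersolution for every $\theta\ge1$, because $g\ge0$. Note that $g\ge0$ is used exactly in these two places.

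\textbf{Step 2 (the eigenvalue excludes such supersolutions).} It then suffices to show that, when $\lambda_1<0$, a supersolution $F(W)\ge g\ge0$ with Neumann condition cannot have this behaviour (very large on an open set, and $u,v$ bounded below by a positive constant). To do so I will pass to the variable $z=e^{-W/2}$. Then $-\Delta W+\frac12|\nabla W|^2 = 2\Delta z/z$, and the inequality becomes
\[
-\Delta z + r\, z\log z \ \le\ -\tfrac{g}{2} z \le 0 .
\]
I will test this against $\varphi$ and integrate by parts; the Neumann conditions kill the boundary terms. The aim is an inequality of the form $\lambda_1\int z\varphi \ge \int r\varphi\, z(1+\log z)$, or a Picone-type identity with $\varphi^2/z$, which contradicts $\lambda_1<0$ along the family $w_t$ (equivalently $\theta u$) as the parameter tends to its limit. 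The positivity lower bound on $u$ and $v$ is what keeps $z$ bounded away from $1$ and lets the signs be controlled.

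\textbf{Main obstacle.} Step 2 is where I expect the real difficulty: the term $r\, z\log z$ has no fixed sign, since $r$ changes sign. I would first try the Picone identity
\[
\int \Big|\nabla\varphi - \tfrac{\varphi}{z}\nabla z\Big|^2 \ge 0 ,
\]
combined with the eigen-equation for $\varphi$, to isolate $\lambda_1\int\varphi^2$. If the $\log$ term cannot be absorbed this way, the fallback is to linearise. The difference $u-v$ satisfies
\[
r(u-v)-\Delta(u-v)+\tfrac12(\nabla u+\nabla v)\cdot\nabla(u-v)\ge0 ,
\]
and I would compare with the principal eigenvalue of this drifted operator, relating it to $\lambda_1(-\Delta+rId)$ through the convexity of Step 1.
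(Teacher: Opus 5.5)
Step 2 cannot be completed, because the lemma as printed has the roles of sub- and supersolution interchanged, and in that direction it is false. You noticed the conclusion is ``the opposite of the usual comparison''; that should have been a warning, not a hint that $\lambda_1$ enters globally. Your own scaling remark already refutes the printed statement. With $F(w)=rw-\Delta w+\frac12|\nabla w|^2$ one has $F(\theta w)=\theta F(w)+\frac{\theta^2-\theta}{2}|\nabla w|^2$. Let $u^*$ be the positive solution of \eqref{eq:HJB}, given by \cite{bll} or by Theorem \ref{theorem:uniqueness}, and bounded below by a positive constant. Then for every $\theta>1$ the pair $(\theta u^*,u^*)$ satisfies all the hypotheses, yet $\theta u^*\le u^*$ fails. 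Likewise $F(\varepsilon\varphi)=\varepsilon\lambda_1\varphi+\frac{\varepsilon^2}{2}|\nabla\varphi|^2\le 0\le g$ for small $\varepsilon>0$, so $(u^*,\varepsilon\varphi)$ is another counterexample. In particular, the claim Step 2 needs is false: $\lambda_1<0$ does not rule out a Neumann supersolution that is very large on an open set, since $\theta u^*$ is large everywhere. So no Cole--Hopf or Picone computation can produce it. The linearisation fallback fails too: for $(\theta u^*,u^*)$, the function $(\theta-1)u^*>0$ is a positive supersolution of your drifted operator, whose principal eigenvalue is therefore nonnegative.

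The intended statement is the one of Lemma \ref{lemma:comparison}, with $u$ the subsolution and $v$ the supersolution. Its proof with $\tilde w$ replaced by $0$ is the paper's (Laetsch-type) proof of this lemma, i.e.\ the scaling argument you discarded, run in the other direction. Since $g\ge 0$, $0$ is a subsolution, so by convexity $\bar\theta u$ is a subsolution for $\bar\theta\in(0,1]$. Let $\theta=\max\{\bar\theta\in(0,1]:\bar\theta u\le v\}$, which is positive because $v$ is bounded below by a positive constant, and set $z=\theta u-v\le 0$. Taking $\theta$ times the inequality for $u$ minus the one for $v$ gives
\[
-\Delta z+\tfrac12(\theta\nabla u+\nabla v)\cdot\nabla z+rz\le(\theta-1)g+\tfrac12(\theta^2-\theta)|\nabla u|^2\le 0,\qquad \partial_\eta z=0 .
\]
Since $z\le 0$, the same inequality holds with $r$ replaced by $r^+$ (the difference $r^-z$ is $\le 0$). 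So the strong maximum principle and Hopf's lemma apply whatever the sign of $r$, and there are two cases. If $z<0$ on $\bar{\mathcal O}$, then $\theta=1$ by maximality. If $z\equiv 0$, both terms on the right vanish; if $\theta<1$ this forces $g\equiv 0$ and $\nabla u\equiv 0$, so $u$ and $v=\theta u$ are positive constants with $ru\le 0\le rv$, contradicting $r(x_0)>0$. Hence $u\le v$. Note that $\lambda_1<0$ plays no role in this argument. What it uses is $g\ge 0$, the convexity of $F$, and the positive lower bound on the supersolution.
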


In the previous, \(\lambda_1(A)\) denotes the first eigenvalue of the operator \(A\) (when it is well defined).

Our main contributions, namely in respect to the results proven in \cite{bll} are: i) to extend Lemma \ref{lemma:comparison_positive} to cases in which we do not have \(g \geq 0\), thus proving an existence and uniqueness result to solutions to (\ref{eq:HJB}) with a certain bound from below (Theorem 3 below); ii) to make more complete the link with the optimal control problem, namely by identifying a set of controls \(\mathcal{A}\) such that \(\inf_{\alpha \in \mathcal A} J(\alpha)\) is the unique solution to (\ref{eq:HJB}).

We start first by working on the case \(g \geq 0\), before showing that we can in fact consider \(g > -c\) for a constant \(c > 0\) which depends on the spectral properties of \(-\Delta + r Id\).\\

Few works have dealt with infinite horizon stochastic optimal control with possible negative discount. We can mention \cite{sz} who dealt with discrete situations in which the discount will eventually remain positive after a certain random time, to \cite{goueletal} for pricing situations (i.e. no control) with negative interest rates and to \cite{toda} for another economic model with random interest rates.\\

\textbf{Modelling comment and consequences in economics:}\\
The function \(r\) is here naturally interpreted as a discount factor, but note that it can also be interpreted as the growth rate of a certain quantity (a mass for instance) which simply multiplies the running cost. Hence, the previous control problem can be seen as a control of a population with varying mass. Indeed, if we start from a mass $1$ for instance, that the population grows at rate $r(X_t)$, and that the instantaneous cost we are facing is proportional to the current mass, then we end up precisely with a problem of the form of \eqref{eq:cost}.

Not entering the economics modelling behind equation \eqref{eq:HJB}, we simply mention that Theorem \ref{theorem:uniqueness} has natural consequences in economics. In particular, it quantifies the size of the price (here $\max -g$) that the agents can rationally agree to pay to maintain the infrastructure of a reserve of value, like gold for instance.\\

In all what follows, we assume that \(\lambda_1(-\Delta + r \operatorname{Id}) < 0\) as well as the existence of a point $x_0 \in \mathcal O$ such that $r(x_0) > 0$. In particular, the sign of $r$ is not constant, because if $r$ is positive, then \(\lambda_1(-\Delta + r \operatorname{Id}) \geq 0\) from standard arguments.

\section{The Case of a Non-Negative Running Cost}

Let \(\mathcal{A} := \{a : \bar{\mathcal O} \to \mathbb{R}^d, \text{smooth}, \text{s.t. } \lambda_1(-\Delta - a \cdot \nabla_x + r \operatorname{Id}) > 0\}\).

The set \(\mathcal{A}\) is the natural candidate for the set of our controls. Indeed, remark that for a smooth \(a : \mathcal O \to \mathbb{R}^d\),

\begin{equation*}
\mathbb{E}\left[ e^{\int_0^Tr(X_t)dt}\right]  = O\left( e^{-\lambda_1(-\Delta - a \cdot \nabla_x + r \operatorname{Id}) T}\right),
\end{equation*}
where $(X_t)_{t \geq 0}$ is the solution of \eqref{eq:process} for $\alpha_t = a(X_t)$. See Lemma 4 in \cite{bll} for instance. Hence, \(\mathcal{A}\) is a set of feedback controls which guarantee that the cost is finite, given that $g \geq 0$. By assumptions  \( 0 \notin \mathcal{A}\), thus the controller indeed needs to do something to make its cost finite (if \(g > 0\)).

Furthermore, in terms of the associated partial differential equation (PDE), the set \(\mathcal{A}\) has nice properties which we recall in the following Lemma.

\begin{lemma}
\label{lemma:existence}
For all \(a \in \mathcal{A}, f \in \mathcal{C}(\bar{\mathcal O})\), the equation

\begin{equation}\label{eq:PDEa}
\begin{cases}
- \Delta u - a \cdot \nabla_xu + r u = f & \text{in } \mathcal O, \\
\partial_\eta u = 0 & \text{on } \partial \mathcal O,
\end{cases}
\end{equation}

has a unique solution. In addition, if \(f \geq 0\), \(f \neq 0\), then, for all \(x \in \mathcal O\), \(u(x) > 0\).
\end{lemma}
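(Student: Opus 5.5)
The plan is to treat $L_a := -\Delta - a\cdot\nabla_x + r\,\mathrm{Id}$ with Neumann boundary condition as a non-self-adjoint second order elliptic operator with smooth coefficients on the bounded smooth domain $\mathcal O$. We will use the Krein--Rutman / principal eigenvalue theory for such operators. That theory gives the following facts.
\begin{itemize}
\item The principal eigenvalue $\lambda_1 = \lambda_1(L_a)$ is real.
\item It admits an eigenfunction $\varphi > 0$ on $\bar{\mathcal O}$ with $\partial_\eta \varphi = 0$.
\item It is characterized by the Donsker--Varadhan / Berestycki--Nirenberg--Varadhan formula $\lambda_1 = \sup\{\lambda : \exists \psi>0,\ L_a\psi \geq \lambda \psi,\ \partial_\eta\psi = 0\}$.
\end{itemize}
The assumption $a\in\mathcal A$ means $\lambda_1>0$.

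First, existence. Fix a large constant $M$ such that $r+M>0$ on $\bar{\mathcal O}$. Then $L_a+M$ satisfies the classical maximum principle with Neumann data, so $(L_a+M)^{-1}$ is well defined from $\mathcal C(\bar{\mathcal O})$ into $W^{2,p}$ for all $p<\infty$. It is therefore compact on $\mathcal C(\bar{\mathcal O})$, and it is strongly positive by the strong maximum principle and Hopf's lemma at the boundary. Equation \eqref{eq:PDEa} is equivalent to the fixed-point problem $u = (L_a+M)^{-1}(f + Mu)$, that is, $(I - MK)u = Kf$ with $K := (L_a+M)^{-1}$.

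By Krein--Rutman, the spectral radius of $K$ is $1/(\lambda_1+M)$. Since $\lambda_1>0$, this gives $\rho(MK) = M/(\lambda_1+M) < 1$. Hence the Neumann series $\sum_n (MK)^n K f$ converges, and it provides a solution. Since $1$ is not in the spectrum of $MK$, the solution is unique. Regularity ($W^{2,p}$, hence $\mathcal C^{1,\beta}$, classical if $f$ is Hölder) follows from elliptic estimates.

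Second, positivity. Every term of the Neumann series is non-negative when $f\geq 0$, so $u\geq 0$. For a more self-contained argument, we can instead compare with the eigenfunction: suppose $\min(u/\varphi) = -m<0$ and set $w = u + m\varphi \geq 0$, which vanishes somewhere. Then $L_a w = f + m\lambda_1\varphi > 0$, which contradicts the minimum principle at the zero of $w$. Both in the interior and at the boundary (via Hopf and $\partial_\eta w=0$), the minimum principle applies in the form $L_a w + r^- w \geq 0$ with $w\geq0$.

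Once $u\geq 0$ is known, $-\Delta u - a\cdot\nabla u + r^+u = f + r^-u \geq 0$, and $f\neq 0$. The strong maximum principle together with Hopf's lemma at $\partial\mathcal O$ then gives $u>0$ everywhere: if $u$ vanished at some point, it would vanish identically, forcing $f\equiv 0$.

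The main obstacle is the spectral step: making rigorous that $\lambda_1(L_a)>0$ is exactly the condition under which $\rho(MK)<1$ (equivalently, that a positive principal eigenfunction exists for the Neumann problem with non-symmetric drift). I would cite Krein--Rutman in the Neumann setting, or Lemma 4 of \cite{bll} where this operator was already handled. The eigenfunction comparison argument above is the fallback, since it simultaneously yields uniqueness by applying it to $\pm(u_1-u_2)$ with $f=0$.
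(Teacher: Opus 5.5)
Your argument is correct. The paper gives no proof of this lemma: it calls the result standard and points to Berestycki--Nirenberg--Varadhan and Lions for the link between the strong maximum principle, positive (super)solutions and the sign of the principal eigenvalue of $L_a$ with Neumann data. Along that cited route, $\lambda_1(L_a)>0$ yields a positive strict supersolution (for instance the principal eigenfunction $\varphi$). That supersolution gives the maximum principle, hence uniqueness and positivity, and existence then follows from the Fredholm alternative for the compact perturbation $I-MK$. Your fallback comparison of $u$ with $m\varphi$ is exactly that mechanism.

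Your main route is spectral instead. You identify $\rho(K)=1/(\lambda_1+M)$ through Krein--Rutman and invert $I-MK$ by a Neumann series, which gives existence, uniqueness and $u\ge 0$ (for $f\ge 0$) in one stroke. The cost is the full Krein--Rutman machinery (compactness, strong positivity, nonempty interior of the cone in $\mathcal C(\bar{\mathcal O})$), which you do verify.

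The step you flag as the main obstacle is actually immediate. The positive principal eigenfunction satisfies $K\varphi=(\lambda_1+M)^{-1}\varphi$, and a strongly positive compact operator has no eigenvalue other than its spectral radius with an eigenvector in the cone.

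Finally, since $f$ is only continuous, $u$ is a strong $W^{2,p}$ solution. The strong maximum principle and Hopf lemma you invoke must therefore be their $W^{2,p}$ versions, which are standard, so nothing is lost.
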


We do not recall the standard proof of such a result and invite the reader to have a look at \cite{bnv,pll} if she or he wants more details on the link between strong maximum principle, the existence of positive sub-solutions to the associated equation, and the first eigenvalue of the operator. Furthermore, recall that thanks to Feynman-Kac representation formulae, for $f \in \mathcal C(\bar {\mathcal O})$, the unique solution $u$ of \eqref{eq:PDEa} satisfies for any $x \in \mathcal O$
$$
u(x) = \mathbb E \left[\int_0^\infty e^{\int_0^t-r(X_s)ds} f(X_t)dt \right],
$$
where $(X_t)_{t \geq 0}$ is given as the sole strong solution of
$$
dX_t = a(X_t)dt + \sqrt{2}dW_t,
$$
with $X_0=x$ and reflection on the boundary of $\mathcal O$.

For any \(a \in \mathcal{A}\), we denote \(v_a\) the unique solution of \eqref{eq:PDEa} associated to \(f = \frac12 |a|^2\). The next result characterizes the unique positive solution \(u\) to (\ref{eq:HJB}) as the value of some optimal control problem.%, since, for any \(a \in \mathcal{A}\), \(v_a\) is the value associated to playing only \(\alpha_t = a(t, X_t)\) in (2), when \(g = 0\).
\begin{theorem}
Let $u$ be the unique positive solution to \eqref{eq:HJB} with $g = 0$ given by Theorem 1 in \cite{bll}. Then, $u = \inf_{a \in \mathcal A}v_a$.
\end{theorem}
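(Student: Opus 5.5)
The plan is to prove the two inequalities $u \leq \inf_{a\in\mathcal A} v_a$ and $u \geq \inf_{a \in \mathcal A} v_a$ separately. The second will follow by exhibiting an optimal feedback, namely $a^* := -\nabla_x u$. Throughout I use that $u$ is smooth up to the boundary (as given by \cite{bll}). I also use that $u>0$ on $\bar{\mathcal O}$: $u>0$ in $\mathcal O$ by assumption, and on $\partial\mathcal O$ it follows from the Neumann condition together with the strong maximum principle and Hopf's lemma. Hence $u$ is bounded below by a positive constant.

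\emph{Step 1: $u \le v_a$ for every $a\in\mathcal A$.} The key identity is the convex duality
\[
\tfrac12|p|^2=\sup_{a\in\mathbb R^d}\bigl(-a\cdot p-\tfrac12|a|^2\bigr),
\]
which gives $\frac12|\nabla_x u|^2 \ge -a\cdot\nabla_x u-\frac12|a|^2$ pointwise. Since $ru-\Delta u+\frac12|\nabla_x u|^2=0$, we obtain
\[
-\Delta u - a\cdot\nabla_x u + ru \le \tfrac12|a|^2 \quad\text{in } \mathcal O, \qquad \partial_\eta u=0 \ \text{on } \partial\mathcal O.
\]
Set $w:=v_a-u$. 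Then $-\Delta w-a\cdot\nabla_x w+rw=:h\ge 0$ with $\partial_\eta w=0$. By the uniqueness part of Lemma \ref{lemma:existence}, $w$ is the solution of \eqref{eq:PDEa} with $f=h$. By the positivity part, $w>0$ if $h\not\equiv0$, and $w=0$ if $h\equiv 0$. In either case $u\le v_a$. Here $h$ is only continuous, which is exactly what Lemma \ref{lemma:existence} allows.

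\emph{Step 2: $a^*=-\nabla_x u$ belongs to $\mathcal A$ and $v_{a^*}=u$.} Plugging in $a^*$ gives
\[
-\Delta u+\nabla_x u\cdot\nabla_x u+ru=\tfrac12|\nabla_x u|^2=\tfrac12|a^*|^2 .
\]
So $u$ solves \eqref{eq:PDEa} with $a=a^*$ and $f=\frac12|a^*|^2$. Once $a^*\in\mathcal A$ is known, uniqueness in Lemma \ref{lemma:existence} yields $v_{a^*}=u$, and combined with Step 1 this gives $u=\min_{a\in\mathcal A}v_a$.

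The main obstacle is proving $\lambda_1(-\Delta-a^*\cdot\nabla_x+r\,\mathrm{Id})>0$. The plan is to use the characterization of \cite{bnv} recalled after Lemma \ref{lemma:existence}: the existence of a function that is positive on $\bar{\mathcal O}$, satisfies the Neumann condition, and is a supersolution with nonnegative, nontrivial right-hand side forces the principal eigenvalue to be strictly positive. Concretely, let $\varphi>0$ be the principal eigenfunction with eigenvalue $\lambda_1$. Consider the largest $t>0$ with $t\varphi\le u$; by the strong maximum principle applied to $u-t\varphi$, this rules out $\lambda_1\le 0$ unless $u$ is a multiple of $\varphi$ with zero right-hand side. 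Equivalently, one can integrate against the principal eigenfunction of the adjoint operator, which gives $\lambda_1\int u\,\varphi^*=\int \frac12|\nabla_x u|^2\varphi^*$.

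The right-hand side $\frac12|\nabla_x u|^2$ is nonnegative, so it remains only to show it is not identically zero. If $\nabla_x u\equiv 0$, then $u$ is a positive constant $c$, and the equation reduces to $rc=0$, that is $r\equiv 0$. This contradicts the standing assumption $r(x_0)>0$. Hence the right-hand side is nontrivial, $\lambda_1>0$, $a^*$ is smooth (since $u$ is), and so $a^*\in\mathcal A$, which completes the argument.
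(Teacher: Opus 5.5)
Your proposal is correct and follows essentially the paper's route: the pointwise inequality $-\tfrac12|\nabla_x u|^2 - a\cdot\nabla_x u\le \tfrac12|a|^2$ combined with Lemma \ref{lemma:existence} gives $u\le v_a$, and then $-\nabla_x u\in\mathcal A$ yields $u=v_{-\nabla_x u}$. Strict positivity of the eigenvalue requires the right-hand side $\tfrac12|\nabla_x u|^2$ to be nontrivial, and your explicit check of this (a constant $u$ would force $r\equiv 0$, contradicting $r(x_0)>0$) fills a step the paper leaves implicit.
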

\begin{proof}
Let \(a \in \mathcal{A}\). We have that for all \(x \in \mathcal O\), \(u(x) > 0\), \(\forall x \in \partial \mathcal O, \partial_\eta u(x) = 0\), as well as

\[
\begin{aligned}
r u(x) - \Delta u(x) - a \cdot \nabla_x u(x)& =  - \frac{1}{2} |\nabla_x u(x)|^2 - a \cdot \nabla_x u(x)\\
&\leq \frac12 |a|^2.
\end{aligned}
\]

Hence, using Lemma \ref{lemma:existence}, it follows that \(u \leq v_a\). Now observe that 
$$
r u(x) - \Delta u(x) - (-\nabla_x u) \cdot \nabla_x u(x)= \frac12 |\nabla_x u|^2 \geq 0.
$$
Recalling that $u$ is positive, we deduce from the previous that $\lambda_1(-\Delta - \nabla_x u \cdot \nabla_x + r \operatorname{Id}) > 0$. It follows that $-\nabla_x u \in \mathcal A$. Hence \(u = v_{-\nabla_x u}\) and the result follows.
\end{proof}

\section{More general running costs}

We now turn to the study of (\ref{eq:HJB}) and
$\inf_{\alpha \in \mathcal A} J(\alpha)$
when \(g\) can take negative values.

In \cite{bll}, the study of \eqref{eq:HJB} (for non-negative $g$) relied on two main arguments: the existence of a positive sub-solution to (\ref{eq:HJB}) and a comparison principle. We shall prove that there exists a constant \(c > 0\) depending on \(r\) such that, if \(g \in C(\bar{\mathcal O})\), \(g \geq -c\), then those two arguments can still be used.

We start with the subsolution property.

\begin{lemma}
\label{lemma:subsolution}
Let \(v_1\) be the first eigenfunction of \(-\Delta + r \mathrm{Id}\) normalized so that \(\int_{\mathcal O} \nu_1 = 1\), and \(\lambda_1 = \lambda_1(-\Delta + r \mathrm{Id})\). Then 
\begin{equation}\label{def:c}
-c := \inf_{\varepsilon > 0}\sup_{x \in \mathcal O} \left\{ \varepsilon \lambda_1 v_1(x) + \frac{1}{2} \varepsilon^2 |\nabla v_1(x)|^2 \right\} < 0,
\end{equation}
 and for \(g \geq -c\), there exists a subsolution \(v\) to \eqref{eq:HJB} such that \(\forall x \in \mathcal O, v(x) > 0\).
\end{lemma}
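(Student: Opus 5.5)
The plan is to look for a subsolution of the form $v = \varepsilon v_1$ with $\varepsilon > 0$. Here $v_1$ is the first eigenfunction of $-\Delta + r\,\mathrm{Id}$ with Neumann boundary condition. By the Krein--Rutman theorem and the strong maximum principle, $v_1$ is smooth on $\bar{\mathcal O}$, satisfies $v_1 > 0$ on $\bar{\mathcal O}$, and $\partial_\eta v_1 = 0$ on $\partial \mathcal O$. So $v$ is positive and satisfies the boundary condition. A direct computation using $-\Delta v_1 + r v_1 = \lambda_1 v_1$ gives
\[
r v - \Delta v + \frac12 |\nabla v|^2 = \varepsilon \lambda_1 v_1 + \frac12 \varepsilon^2 |\nabla v_1|^2 .
\]
Hence $v$ is a subsolution of \eqref{eq:HJB} exactly when $\varepsilon\lambda_1 v_1(x) + \frac12\varepsilon^2|\nabla v_1(x)|^2 \le g(x)$ for all $x$. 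This explains the definition \eqref{def:c}.

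The key step is to show that $-c < 0$. Set $m := \min_{\bar{\mathcal O}} v_1 > 0$ and $M := \sup_{\bar{\mathcal O}} |\nabla v_1|^2 < \infty$, both finite by compactness and smoothness. Since $\lambda_1 < 0$, for every $\varepsilon > 0$ and every $x$ we have
\[
\varepsilon \lambda_1 v_1(x) + \frac12 \varepsilon^2 |\nabla v_1(x)|^2 \le \varepsilon \lambda_1 m + \frac12 \varepsilon^2 M .
\]
The right-hand side is strictly negative for $0 < \varepsilon < 2|\lambda_1| m / M$; if $M = 0$, any $\varepsilon$ works. Taking the infimum over $\varepsilon$ gives $-c \le -\lambda_1^2 m^2/(2M) < 0$, with the choice $\varepsilon = |\lambda_1| m/M$. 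Thus $c > 0$.

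To build the subsolution for $g \ge -c$, I would handle the attainment of the infimum. The function
\[
\phi(\varepsilon) := \sup_x \left\{ \varepsilon\lambda_1 v_1 + \frac12\varepsilon^2 |\nabla v_1|^2 \right\}
\]
is a supremum of convex quadratics in $\varepsilon$, hence convex and continuous on $(0,\infty)$. It tends to $0$ as $\varepsilon \to 0$, since it is squeezed between $\varepsilon\lambda_1 \max v_1$ and $\frac12\varepsilon^2 M$, and it tends to $+\infty$ as $\varepsilon \to \infty$ when $M > 0$. Therefore the infimum $-c$ is attained at some $\varepsilon^* > 0$. Then $v = \varepsilon^* v_1$ satisfies
\[
r v - \Delta v + \frac12|\nabla v|^2 \le \phi(\varepsilon^*) = -c \le g ,
\]
so $v$ is a positive subsolution.

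In the degenerate case $M = 0$, $v_1$ is constant and $\phi(\varepsilon) = \varepsilon \lambda_1 v_1 \to -\infty$, so $c = +\infty$. The statement is then vacuous or trivially true: take $\varepsilon$ large depending on $\min g$. The main obstacle is mostly bookkeeping. One must make sure that $v_1$ is strictly positive up to the boundary, which follows from the Neumann condition and the Hopf lemma, and that the infimum is attained. If attainment were an issue, one would instead note that for $g > -c$ some $\varepsilon$ gives $\phi(\varepsilon) \le g$, and treat $g = -c$ via the minimizer above.
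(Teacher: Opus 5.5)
Your proof is correct and follows the paper's route. You take $\varepsilon v_1$, use $v_1>0$ on $\bar{\mathcal O}$ together with $\lambda_1<0$ to get $c>0$, and use non-constancy of $v_1$ to get $c<+\infty$ and attainment of the infimum at some $\bar\varepsilon$; your explicit bound and the convexity/continuity argument fill in details the paper leaves implicit. One remark: the degenerate case $M=0$ cannot occur under the paper's standing assumptions, because a constant $v_1$ forces $r\equiv\lambda_1<0$, contradicting $r(x_0)>0$, and this is exactly how the paper obtains $c<+\infty$.
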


\begin{proof}
By construction \(-\Delta v_1 + r v_1 = \lambda_1 v_1\). Thus,

\begin{equation*}
  - \Delta(\varepsilon v_1) + r(\varepsilon v_1) + \frac{1}{2} |\nabla_x(\varepsilon v_1)|^2 = \varepsilon \lambda_1 v_1(x) + \frac{1}{2} \varepsilon^2 |\nabla_x v_1(x)|^2 + g(x) - g(x).
\end{equation*}

Now, since \(\forall x \in \bar{ \mathcal O}, v_1(x) > 0, c > 0\) holds. Furthermore, since \(v_1\) is not constant (because \(r\) is not either), \(c < +\infty\) and the infimum in \(\varepsilon\) is reached at some $\bar \varepsilon$. Hence, if \(g \geq -c\), then \(\bar \varepsilon v_1\) is a suitable subsolution.
\end{proof}

The previous Lemma is merely a computation, but it allows us to define precisely the constant \(c\) which is going to be of interest in the following. Furthermore, we denote by $\tilde w$ the subsolution $\bar\varepsilon v_1$. We now prove the following crucial comparison principle.

\begin{lemma}
\label{lemma:comparison}
Let \(c > 0\) be defined in (5) and consider \(g \geq -c\), \(g \in \mathcal{C}(\mathcal O)\), \(u, v\) smooth functions such that, $u \geq \tilde w$, for all $x \in \bar{\mathcal O}, v(x) > \tilde w(x)$, and 

\begin{equation*}
- \Delta u + \frac{1}{2} |\nabla u|^2 + r u \leq g \quad \text{in } \mathcal O,
\end{equation*}

\begin{equation*}
- \Delta v + \frac{1}{2} |\nabla v|^2 + r v \geq g \quad \text{in } \mathcal O,
\end{equation*}

\begin{equation*}
\partial_\eta u = \partial_\eta v = 0 \quad \text{on } \partial \mathcal O.
\end{equation*}

Then \(u \leq v\).
\end{lemma}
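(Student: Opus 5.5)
The plan is to reduce to the positive-source situation of Lemma \ref{lemma:comparison_positive} by shifting both functions by the subsolution $\tilde w=\bar\varepsilon v_1$ of Lemma \ref{lemma:subsolution}, and then to run a multiplicative (``$\theta V$'') comparison argument that exploits the convexity of the Hamiltonian. Set $U:=u-\tilde w\geq 0$ and $V:=v-\tilde w>0$ on $\bar{\mathcal O}$. Both satisfy homogeneous Neumann conditions, since $\partial_\eta v_1=0$. Write
\[
\tilde g:=-\Delta\tilde w+r\tilde w+\tfrac12|\nabla\tilde w|^2=\bar\varepsilon\lambda_1 v_1+\tfrac12\bar\varepsilon^2|\nabla v_1|^2\leq -c ,
\]
where the inequality comes from the definition \eqref{def:c} of $c$. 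Expanding $\frac12|\nabla(U+\tilde w)|^2$ then gives
\[
-\Delta U+\nabla\tilde w\cdot\nabla U+\tfrac12|\nabla U|^2+rU\leq h,\qquad -\Delta V+\nabla\tilde w\cdot\nabla V+\tfrac12|\nabla V|^2+rV\geq h ,
\]
with $h:=g-\tilde g\geq g+c\geq 0$. This is again an HJB equation, now with a smooth drift $b=\nabla\tilde w$ and a non-negative source. The goal thereby becomes $U\leq V$.

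Let $\theta:=\max_{\bar{\mathcal O}}U/V\geq 0$, which is finite because $V>0$ on the compact set $\bar{\mathcal O}$. If $\theta\leq 1$ we are done, so assume $\theta>1$. A direct computation shows that $\theta V$ satisfies
\[
-\Delta(\theta V)+b\cdot\nabla(\theta V)+\tfrac12|\nabla(\theta V)|^2+r\theta V\geq \theta h+\tfrac12\theta(\theta-1)|\nabla V|^2\geq h .
\]
Set $W:=\theta V-U\geq 0$, which attains the value $0$ somewhere. Subtracting the inequality for $U$ and writing $\frac12|\nabla(\theta V)|^2-\frac12|\nabla U|^2=\frac12(\nabla\theta V+\nabla U)\cdot\nabla W$, we obtain a linear inequality
\[
-\Delta W+\tilde b\cdot\nabla W+rW\geq 0,
\]
with a continuous drift $\tilde b$, together with $\partial_\eta W=0$.

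The key step, and the one I expect to be the main obstacle, is to apply the strong maximum principle (with the Hopf lemma at the boundary) to this inequality even though $r$ changes sign. The point to check is that the sign restriction on the zeroth-order coefficient is irrelevant for a non-negative supersolution whose minimum value is $0$: one can write $rW=r^+W-r^-W$ and move $-r^-W\leq 0$ to the right-hand side, so that $-\Delta W+\tilde b\cdot\nabla W+r^+W\geq 0$. The Neumann condition rules out a boundary minimum via Hopf unless $W$ is constant. Hence $W\equiv 0$, that is, $U=\theta V$ on $\bar{\mathcal O}$.

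To conclude, plug $U=\theta V$ into the two inequalities. They give $h\geq\theta h+\frac12\theta(\theta-1)|\nabla V|^2$ everywhere, so $h\equiv 0$ and $\nabla V\equiv 0$ because $\theta>1$. Hence $V$ is a positive constant, and the two inequalities reduce to $rV\geq 0$ and $r\theta V\leq 0$. Since $V>0$, this forces $r\equiv 0$, contradicting the standing assumption that $r(x_0)>0$ (and also $\lambda_1<0$). Therefore $\theta\leq 1$, i.e. $U\leq V$, which gives $u\leq v$. I would also double-check the degenerate case $\theta=0$ (then $U\equiv 0$, which is trivial) and verify that the smoothness assumed on $u,v$ suffices for the touching-point and Hopf arguments.
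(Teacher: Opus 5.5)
Your proof is correct and follows essentially the same route as the paper: you shift by $\tilde w$ to get a drifted HJB inequality with non-negative source ($h$ is the paper's $f$), run Laetsch's scaling argument using the convexity of $\frac12|p|^2$, apply the strong maximum principle and the Hopf lemma to the linearized difference, and exclude the degenerate constant case because $r$ changes sign. The differences are cosmetic: you scale the supersolution $V$ up by $\theta=\max U/V>1$ instead of scaling $\bar u$ down by $\theta\le 1$, and you make explicit the $r^{+}/r^{-}$ splitting that justifies the maximum principle despite the sign of $r$, which the paper leaves implicit.
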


\begin{proof}
The proof is an adaptation of the proof of Lemma 1 in \cite{bll}, which is itself inspired from a technique of Laetsch \cite{laetsch}.
The objective is to compare \(\bar{u} = u - \tilde w\) and \(\bar{v} = v - \tilde w\).

Observe that

\begin{equation*}
- \Delta \bar{u} + r \bar{u} + \frac{1}{2} |\nabla \bar{u}|^2 \leq g + \Delta \tilde w - r\tilde  w - \frac{1}{2} |\nabla \tilde w|^2 - \nabla \tilde w \cdot \nabla \bar u,
\end{equation*}

\begin{equation*}
- \Delta \bar{v} + r \bar{v} + \frac{1}{2} |\nabla \bar{v}|^2 \geq g + \Delta \tilde w - r \tilde w - \frac{1}{2} |\nabla \tilde w|^2 - \nabla \tilde w \cdot \nabla \bar v.
\end{equation*}

Note, \(f := g + \Delta \tilde w - r\tilde w - \frac{1}{2} |\nabla \tilde w|^2 \geq 0\), and we rewrite the previous inequalities as

\begin{equation}\label{equbar}
- \Delta \bar{u} + r \bar{u} + \frac{1}{2} |\nabla \bar{u}|^2 + \nabla \tilde w \cdot \nabla \bar{u} \leq f,
\end{equation}

\begin{equation}\label{eqvbar}
- \Delta \bar{v} + r \bar{v} + \frac{1}{2} |\nabla \bar{v}|^2 + \nabla \tilde w \cdot \nabla \bar{v} \geq f.
\end{equation}

We are now in position to use Laetsch's technique. Define

\begin{equation*}
\theta := \max \left\{ \bar{\theta} \in (0, 1] \mid \bar{\theta} \bar{u} \leq \bar{v} \right\}.
\end{equation*}

Since \(\bar{u}\) and \(\bar{v}\) are respectively non-negative and positive, \(\theta > 0\). Furthermore, defining \(z = \bar{u}_{\theta} - \bar{v}\) where \(\bar{u}_\theta = \theta \bar{u}\), we compute $\theta\eqref{equbar} - \eqref{eqvbar}$ to obtain 

\begin{equation}\label{eq5}
- \Delta z + r z + \nabla \tilde w \cdot \nabla z + \frac{1}{2} (\nabla \bar{u}_\theta + \nabla \bar{v}) \cdot \nabla z \leq (\theta - 1) f + \frac12(\theta^2-\theta)|\nabla \bar u|^2\leq 0.
\end{equation}

But \(z \leq 0\) and \(\partial_{\eta} z = 0\) on \(\partial \mathcal O\) implies that either \(z\) is negative or \(z\) is constant equal to 0, as a consequence of the strong maximum principle.

If \(z\) is negative, then \(\theta = 1\) and the result is proved.

If \(z \equiv 0\), then \eqref{eq5} is in fact an equality. In particular either $\theta = 1$ and the result is proved, or $f = |\nabla \bar u|^2 \equiv 0$, hence \(\bar{u}_\theta\) is constant, but then so is \(\bar{v}\), which is not possible since \(r\) changes sign.
\end{proof}

The previous comparison principle is the main ingredient for the uniqueness of solutions to (\ref{eq:HJB}). It is also a fundamental tool to prove existence, which also requires an a priori estimate, which is provided in \cite{bll} (Lemma 2). We have the following.

\begin{theorem}
\label{theorem:uniqueness}
For \(c > 0\) defined in \eqref{def:c} and \(g \in \mathcal{C}(\bar{\mathcal O})\), \(g \geq -c\), there exists a unique solution \(u\) to (\ref{eq:HJB}) such that \(u(x) > \tilde w(x)\) for all \(x \in \bar {\mathcal O}\).
\end{theorem}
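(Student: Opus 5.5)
Uniqueness follows directly from Lemma \ref{lemma:comparison}. Let \(u_1,u_2\) be two solutions of \eqref{eq:HJB} with \(u_i>\tilde w\) on \(\bar{\mathcal O}\). Take \(u=u_1\) as the subsolution and \(v=u_2\) as the supersolution in Lemma \ref{lemma:comparison}; the hypotheses \(u_1\geq \tilde w\) and \(u_2>\tilde w\) hold. This gives \(u_1\le u_2\), and exchanging the roles gives equality. The regularity needed (smooth, or at least \(C^2\) up to the boundary) comes from standard elliptic regularity for \eqref{eq:HJB} with continuous \(g\). If only \(W^{2,p}\) regularity is available, the strong maximum principle argument in Lemma \ref{lemma:comparison} still applies in that class.

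For existence I would use monotone iteration, or equivalently a truncation plus fixed point argument, between an ordered pair of sub- and supersolutions.

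\emph{Subsolution.} Lemma \ref{lemma:subsolution} provides \(\tilde w=\bar\varepsilon v_1\). To obtain strict inequality \(u>\tilde w\) in the end, I would instead use a slightly smaller multiple \(\varepsilon' v_1\) with \(\varepsilon'<\bar\varepsilon\). This is not automatically a subsolution. A cleaner route is to build the solution \(\geq\tilde w\) and then obtain strictness by the strong maximum principle. Set \(\bar u=u-\tilde w\geq 0\), which satisfies \eqref{eqvbar} with equality and \(f\ge0\). Then either \(\bar u>0\) everywhere (Hopf's lemma handles boundary points, thanks to the Neumann condition), or \(\bar u\equiv0\). The case \(\bar u\equiv0\) forces \(f\equiv0\), and so \(\tilde w\) itself solves the equation. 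This borderline case is where strictness can genuinely fail. I expect it can occur only if \(g\equiv -c\) in a degenerate configuration; I would treat it separately or note that \(f\not\equiv 0\) whenever \(g\not\equiv -c+\ldots\).

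\emph{Supersolution.} This is the step I expect to be the main obstacle, because \(r\) changes sign and constants are not supersolutions where \(r<0\). I would use a large supersolution constructed from a control in \(\mathcal A\). Take \(u_0\), the positive solution of \eqref{eq:HJB} with right-hand side \(g^+ + M\) provided by \cite{bll} for nonnegative data. It is a supersolution for \(g\) since \(g\le g^++M\). It also dominates \(\tilde w\): indeed \(\tilde w\) is a subsolution for the datum \(-c\le g^++M\), and Lemma \ref{lemma:comparison_positive}, or Lemma \ref{lemma:comparison} applied with \(g^++M\ge -c\), gives \(u_0\ge\tilde w\).

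\emph{Construction of the solution.} Between the ordered pair \(\tilde w\le u_0\), I would truncate the gradient term to \(\min(\frac12|\nabla u|^2,K)\). The a priori gradient estimate of \cite{bll} (Lemma 2) shows that the truncation is inactive for large \(K\). Adding \(\mu u\) to both sides with \(\mu>\max|r|\), the map \(w\mapsto\) (solution of \(-\Delta u+(r+\mu)u=g+\mu w-\frac12|\nabla w|^2\)) is handled by a Schauder or Leray--Schauder fixed point on the order interval \([\tilde w,u_0]\). The order interval is preserved because \(\tilde w\) and \(u_0\) are sub- and supersolutions. The resulting fixed point solves \eqref{eq:HJB} and satisfies \(u\ge\tilde w\); strictness then follows by the argument above.
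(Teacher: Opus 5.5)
Your uniqueness argument is the paper's. For existence, the paper bounds the increasing iterates $-\Delta u_{n+1}+(r+\mu)u_{n+1}+\frac12|\nabla u_{n+1}|^2=\mu u_n+g$, $u_0=\tilde w$, with the a priori estimate of Lemma 2 in \cite{bll}. You replace that estimate by an explicit supersolution $U$, which is a good idea.

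\textbf{The gap.} The fixed-point step, as you describe it, fails. The order interval $[\tilde w,U]$ is \emph{not} invariant under $T:w\mapsto u$, where $-\Delta u+(r+\mu)u=g+\mu w-\frac12|\nabla w|^2$ (truncated or not). Set $\phi:=\bar\varepsilon\lambda_1 v_1+\frac12\bar\varepsilon^2|\nabla v_1|^2$ and $f=g-\phi\ge 0$. Then, exactly,
\[
-\Delta(Tw-\tilde w)+(r+\mu)(Tw-\tilde w)=f+\mu(w-\tilde w)+\tfrac12\bigl(|\nabla\tilde w|^2-|\nabla w|^2\bigr).
\]
The pointwise constraint $\tilde w\le w\le U$ says nothing about $\nabla w$. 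Take $w\ge\tilde w$ to be a small-amplitude, rapidly oscillating perturbation of $\tilde w$. The last term then becomes arbitrarily negative (of order $-K$ after truncation), while $\mu(w-\tilde w)$ stays small. So $Tw\ge\tilde w$ fails in general. The sub- and supersolution inequalities can only be transferred to a function whose \emph{own} gradient sits in the quadratic term.

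There are two standard repairs.
\begin{enumerate}
\item Keep the quadratic term on the unknown, as in the paper's scheme. Comparison for this proper operator then gives $\tilde w\le u_n\le u_{n+1}\le U$ by induction, because the difference of the quadratic terms is the drift $\frac12\nabla(u_{n+1}+U)\cdot\nabla$. In this version your $U$ does the job of Lemma 2 of \cite{bll}.
\item Freeze only $\pi(w)=\max(\tilde w,\min(w,U))$ in the zeroth-order term and take a Schauder fixed point in a ball of $C^1$. Then prove \emph{a posteriori} that the fixed point lies in $[\tilde w,U]$, choosing $K\ge\frac12\max(\|\nabla U\|_\infty^2,\|\nabla\tilde w\|_\infty^2)$. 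This is the Amann--Crandall argument.
\end{enumerate}

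\textbf{Two smaller points.} First, $\tilde w\le U$ does follow from Lemma~\ref{lemma:comparison_positive} in its intended form (subsolution below supersolution). Indeed, $\tilde w$ is a subsolution for the nonnegative datum $g^++M$, and both functions are bounded below by positive constants. Lemma~\ref{lemma:comparison} cannot be used here, since it presupposes $U>\tilde w$.

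Second, the borderline case you leave open never occurs. Since $\phi\le -c\le g$, $f\equiv 0$ would force $\phi\equiv -c$. But at a maximum point and at a minimum point of $v_1$ on $\bar{\mathcal O}$ one has $\nabla v_1=0$, using the Neumann condition at boundary points. Hence $\phi(x_{\max})=\bar\varepsilon\lambda_1\max v_1<\bar\varepsilon\lambda_1\min v_1=\phi(x_{\min})$, because $\lambda_1<0$ and $v_1$ is not constant. Thus $f\not\equiv 0$, and your strong maximum principle and Hopf argument gives $u>\tilde w$ on $\bar{\mathcal O}$. The paper uses this tacitly when it asserts $u_1>u_0$.
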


\begin{proof}
Uniqueness follows from Lemma \ref{lemma:comparison}. For the existence, we argue as in \cite{bll} and consider \(K > 0\) such that \(K + r\) is a positive function. Define then a sequence \((u_n)\) with \(u_0 = \tilde w\) and \(u_{n+1}\), the unique solution to

\[
\begin{cases}
- \Delta u_{n+1} + r u_{n+1} + K u_{n+1} + \frac{1}{2} |\nabla_x u_{n+1}|^2 = K u_n + g & \text{in } \mathcal O, \\
\partial_\eta u_{n+1} = 0 & \text{on } \partial \mathcal O.
\end{cases}
\]

By the maximum principle (which can be used since $K$ is large enough), \(\forall x, u_1(x) > u_0(x)\), therefore, the maximum principle also yields that $u_2 \geq u_1$ and thus, by induction that \((u_n)_{n \geq 0}\) is an increasing sequence. Using the bound of Lemma 2 in \cite{bll}, we also deduce that it is bounded. Its limit \(u\) is a required solution of the problem.
\end{proof}

To conclude the study of our stochastic optimal control problem, we show that the function \(u\) given by Theorem \ref{theorem:uniqueness} can be represented as it was the case in the previous section. For any \(a \in \mathcal{A}\), denote \(w_a\) the unique solution to \eqref{eq:PDEa} associated to \(f = \frac{1}{2} a^2 + g\).

\begin{theorem}
\label{theorem:representation}
For \(c > 0\) defined in (5), let \(g \in \mathcal{C}(\bar{\mathcal O})\), \(g \geq -c\). Let \(u\) be the function given by Theorem \ref{theorem:uniqueness}, then \(u = \inf_{a \in \mathcal{A}} w_a\). 
\end{theorem}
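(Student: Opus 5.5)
We mimic the proof of the first representation theorem: show $u \le w_a$ for every $a \in \mathcal A$, and then show that $-\nabla_x u \in \mathcal A$ with $w_{-\nabla_x u} = u$.

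\emph{Step 1: $u \le w_a$ for every $a\in\mathcal A$.} Fix $a \in \mathcal A$. Since $u$ solves \eqref{eq:HJB}, we have pointwise
\[
-\Delta u - a\cdot\nabla_x u + r u = g - \tfrac12|\nabla_x u|^2 - a\cdot \nabla_x u \le g + \tfrac12 |a|^2 .
\]
The inequality holds because $-\tfrac12|p|^2 - a\cdot p \le \tfrac12|a|^2$ for every $p$. So $u$ is a subsolution of \eqref{eq:PDEa} with $f = \tfrac12|a|^2+g$, while $w_a$ is the solution. The difference $w_a - u$ satisfies $(-\Delta - a\cdot\nabla_x + r)(w_a-u) \ge 0$ with Neumann boundary conditions. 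Lemma \ref{lemma:existence} (more precisely its positivity part, extended to non-strict inequalities as is standard when $\lambda_1(-\Delta - a\cdot\nabla_x + r\operatorname{Id})>0$) then gives $w_a - u \ge 0$. This step does not use the sign of $g$, nor the constraint $u > \tilde w$.

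\emph{Step 2: $-\nabla_x u \in \mathcal A$.} This is the main obstacle. In the case $g\ge 0$ we used that $u>0$ is a positive supersolution of the linearized operator. Here we have instead
\[
-\Delta u + \nabla_x u\cdot\nabla_x u + r u = g + \tfrac12 |\nabla_x u|^2 ,
\]
and $g$ may be negative, so we need a different positive supersolution. The natural candidate is $\bar u = u - \tilde w$, which is positive on $\bar{\mathcal O}$ by Theorem \ref{theorem:uniqueness}. Set $b := -\nabla_x u$. Using $-\Delta\tilde w + r\tilde w + \tfrac12|\nabla\tilde w|^2 = -f + g$ with $f\ge 0$ as in the proof of Lemma \ref{lemma:comparison}, a direct computation gives
\[
(-\Delta - b\cdot\nabla_x + r)\bar u = g + \tfrac12|\nabla u|^2 - \big(g - f - \tfrac12|\nabla \tilde w|^2\big) - \nabla u\cdot\nabla\tilde w = f + \tfrac12|\nabla u - \nabla\tilde w|^2 \ge 0 .
\]
So $\bar u$ is a positive strict-or-zero supersolution of the operator $-\Delta - b\cdot\nabla_x + r\operatorname{Id}$ with Neumann boundary condition. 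By the characterization of the principal eigenvalue through positive supersolutions (\cite{bnv}), this yields $\lambda_1 \ge 0$, with $\lambda_1>0$ unless the right-hand side vanishes identically.

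To exclude the degenerate case, suppose $f\equiv 0$ and $\nabla \bar u \equiv 0$. Then $\bar u$ is a positive constant $k$ with $r k = 0$, which is impossible since $r(x_0)>0$. Hence $\lambda_1(-\Delta - b\cdot\nabla_x + r\operatorname{Id})>0$. One also needs $b$ to be smooth, which follows from elliptic regularity of $u$ if $g$ is smooth enough; otherwise one approximates $b$ by smooth fields (openness of the condition $\lambda_1 > 0$) and passes to the limit in Step 3.

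\emph{Step 3: conclusion.} With $b = -\nabla_x u$, the identity $-\Delta u - b\cdot\nabla_x u + ru = \tfrac12|b|^2 + g$ holds exactly. By the uniqueness part of Lemma \ref{lemma:existence}, $u = w_{b}$. Combining this with Step 1 gives $u = \min_{a\in\mathcal A} w_a$, so the infimum is attained by the feedback $-\nabla_x u$.
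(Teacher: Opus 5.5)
Your proof is correct. Steps 2 and 3 follow the paper. It also takes $u-\tilde w$ as a positive supersolution of $-\Delta+\nabla_x u\cdot\nabla_x+r\operatorname{Id}$ with Neumann condition, and gets the same lower bound $\frac12|\nabla_x u-\nabla_x\tilde w|^2$. You add three refinements the paper does not make explicit:
\begin{itemize}
\item you keep the extra term $f\ge 0$;
\item you rule out the case where the right-hand side vanishes identically by using $r(x_0)>0$, which matters because a positive supersolution alone only gives $\lambda_1\ge 0$;
\item you flag the regularity of $-\nabla_x u$ that membership in $\mathcal A$ requires.
\end{itemize}

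The genuine difference is Step 1. The paper proves $u\le w_a$ through the nonlinear comparison principle of Lemma \ref{lemma:comparison}, in three moves:
\begin{itemize}
\item it applies the linear maximum principle for $-\Delta-a\cdot\nabla_x+r\operatorname{Id}$ to $w_a-\tilde w$, using $(-\Delta-a\cdot\nabla_x+r\operatorname{Id})(w_a-\tilde w)\ge\frac12|a+\nabla_x\tilde w|^2$, to place $w_a$ above $\tilde w$;
\item it observes that $w_a$ is a supersolution of \eqref{eq:HJB};
\item it invokes the lemma.
\end{itemize}
You instead apply that same linear maximum principle directly to $w_a-u$, using $-\frac12|p|^2-a\cdot p\le\frac12|a|^2$, exactly as in the case $g\ge 0$ of Section 2.

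Your route is shorter and more general. It uses neither Lemma \ref{lemma:comparison}, nor $g\ge -c$, nor $u>\tilde w$, so it shows that every classical solution of \eqref{eq:HJB} lies below $\inf_{a\in\mathcal A}w_a$. Combined with your Steps 2 and 3, this says the solution of Theorem \ref{theorem:uniqueness} is the maximal solution of \eqref{eq:HJB}. It also isolates the role of the lower bound $u>\tilde w$ (and hence of $g\ge -c$): it is needed only to show that the infimum is attained at $-\nabla_x u$. The paper's detour yields only the by-product that each $w_a$ lies above $\tilde w$, i.e. in the class where the comparison lemma applies, and the statement does not need this.
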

\begin{proof}
Let $a \in \mathcal A$ and consider $w_a$. We want to use Lemma \ref{lemma:comparison} to prove that $u \leq w_a$. In order to do so, we simply need to verify that for any $x \in \bar{\mathcal O}, w_a(x) > \tilde w(x)$. But by construction
$$
\begin{aligned}
-\Delta& (w_a -\tilde w) + r(w_a -\tilde w) - a \cdot \nabla_x(w_a- \tilde w)\\
&\geq \frac12|a|^2 + \frac 12 |\nabla_x \tilde w|^2 + a \cdot \nabla_x \tilde w = \frac12|a +\nabla_x \tilde w|^2 \geq 0.
\end{aligned}
$$
Since $a \in \mathcal A$, we deduce that either $w_a = \tilde w$ and $a = -\nabla_x \tilde w$, or $w_a -\tilde w$ is positive. In the first case, we found out that $u= \tilde w= w_a$. In the other one, using Lemma \ref{lemma:comparison}, we obtain $u \leq w_a$. Hence, we indeed have $u \leq \inf_{a \in \mathcal A}w_a$. To obtain the equality, it remains to verify that $-\nabla_x u \in \mathcal A$. But this follows from the inequality

$$
r (u-\tilde w) - \Delta (u - \tilde w) - (-\nabla_x u) \cdot \nabla_x (u-\tilde w)\geq \frac12 |\nabla_x u|^2  - \nabla_x u\cdot \nabla_x \tilde w + \frac12|\nabla_x \tilde w|^2\geq 0.
$$
Recalling that $u-\tilde w$ is non-negative, it follows that either $u - \tilde w$ is non-negative and then $-\nabla_x u \in \mathcal A$, or that $u = \tilde w$ and thus also, $-\nabla_x u = -\nabla_x w \in \mathcal A$. Hence \(u = w_{-\nabla_x u}\) and the result follows.

\end{proof}

\section*{Acknowledgements}
The authors acknowledge a partial support from the Chair FDD (Institut Louis Bachelier). The first author's work is supported by the ERC project PaDiESeM.

\end{document}